\newtheorem{theorem}{Theorem}
\newtheorem{corollary}[theorem]{Corollary}
\newtheorem{remark}[theorem]{Remark}
\begin{document}

\title{\textbf{Estimates of convolution operators of functions from $L_{2\pi}^{p(x)}$ }}
\author{\textbf{W\l odzimierz \L enski \ and Bogdan Szal} \\
University of Zielona G\'{o}ra\\
Faculty of Mathematics, Computer Science and Econometrics\\
65-516 Zielona G\'{o}ra, ul. Szafrana 4a, Poland\\
W.Lenski@wmie.uz.zgora.pl, B.Szal @wmie.uz.zgora.pl}
\date{}
\maketitle

\begin{abstract}
We generalize and slight improve the result of I. I. Sharapudinov [Mat.
Zametki, 1996, Volume 59, Issue 2, 291--302]. Some applications to the de la
Vall\'{e}e Poussin operator will also be given.
\end{abstract}

\ \ \ \ \ \ \ \ \ \ \ \ \ \ \ \ \ \ \ \ 

\noindent \textbf{Key words: }convolution operators, $L_{2\pi }^{p}(p=p(x))$ spaces, rate of approximation.

\ \ \ \ \ \ \ \ \ \ \ \ \ \ \ \ \ \ \ 

\noindent \textbf{2010 Mathematics Subject Classification: }47B38, 44A35, 41A35, 42A24.

\section{Introduction}

Let $p=p(x)$ will be a measurable $2\pi $ - periodic function, $p_{-}=\inf
\left\{ p(x):x\in 
\mathbb{R}
\right\} $, $p^{-}$ $=\sup \left\{ p(x):x\in 
\mathbb{R}
\right\} $, $1$ $\leq p_{-}\leq $ $p^{-}$ $<\infty $ and $L_{2\pi }^{p}$
will be the space of measurable $2\pi $ - periodic functions $f$ such that $%
\int_{-\pi }^{\pi }\left\vert f(x)\right\vert ^{p(x)}dx<\infty $.

Putting%
\[
\left\Vert f\right\Vert _{p}=\inf \left\{ \alpha >0:\int_{-\pi }^{\pi
}\left\vert \frac{f(x)}{\alpha }\right\vert ^{p(x)}dx\leq 1\right\} 
\]%
we turn $L_{2\pi }^{p}$ into the Banach space. We write $\Pi _{2\pi }$ for
the set of all $2\pi $ - periodic variable exponents $p=p(x)\geq 1$
satisfying the condition%
\begin{equation}
\left\vert p(x)-p(y)\right\vert \ln \frac{2\pi }{\left\vert x-y\right\vert }%
=O\left( 1\right) \text{,\ }\left( x,y\in \left[ -\pi ,\pi \right] \right) .
\label{1}
\end{equation}

In the paper \cite{IIS1} I. I. Sharapudinov proved the following theorem:

\noindent \textbf{Theorem A.} \textit{Let }$k_{\lambda }=k_{\lambda }(x)$\textit{\ }$%
\left( 1\leq \lambda <\infty \right) $\textit{\ be a measurable }$2\pi $%
\textit{\ -periodic essentially bounded function (kernel) satisfying the
conditions:}

\noindent \textit{A}$^{\circ }$\textit{) }$\int_{-\pi }^{\pi }\left\vert k_{\lambda
}(t)\right\vert dt\leq c_{1}^{\circ },$

\noindent \textit{B}$^{\circ }$\textit{) }$\sup_{t}\left\vert k_{\lambda
}(t)\right\vert \leq c_{2}^{\circ }\lambda ^{\eta }$\textit{,}

\noindent \textit{C}$^{\circ }$\textit{) }$\left\vert k_{\lambda }(t)\right\vert \leq
c_{3}^{\circ }$\textit{, \ }$\left( \lambda ^{-\gamma }\leq \left\vert
t\right\vert <\pi \right), $

\noindent \textit{where }$\gamma ,\eta ,$\textit{\ }$c_{1}^{\circ },c_{2}^{\circ
},c_{3}^{\circ }>0$\textit{\ are independent of }$\lambda $\textit{.}

\textit{If }$f\in L_{2\pi }^{p}$\textit{\ with }$p=p(x)$\textit{\ }$\in \Pi
_{2\pi }$\textit{, then}%
\[
\left\Vert K_{\lambda }[f]\right\Vert _{p}=O\left( 1\right) \left\Vert
f\right\Vert _{p}, 
\]%
\textit{where}%
\[
K_{\lambda }[f](x)=\int_{-\pi }^{\pi }f(t)k_{\lambda }(t-x)dt. 
\]%
We generalize and slight improve this result considering the wider family of
two parameters convolution operators. Some applications to the de la Vall%
\'{e}e Poussin operator will also be given.

\section{Main result}

Denote by $k_{m,n}=k_{m,n}(x),$ for every $0\leq m\leq n<\infty,$ a measurable $%
2\pi $ -periodic essentially bounded function (kernel). Let define the
linear operator

\[
K_{m,n}[f]=K_{m,n}[f](x)=\int_{-\pi }^{\pi }f(t)k_{m,n}(t-x)dt 
\]%
in space $L_{2\pi }^{p}.$ We will say that the kernel family $\left\{
k_{m,n}(x)\right\} _{0\leq m\leq n<\infty }$ satisfies the conditions B) and
C), respectively, if the following estimates hold:

\noindent B) $\sup_{\left\vert t\right\vert \leq h_{m}}\left\vert
k_{m,n}(t)\right\vert \leq c_{2}\left( n+1\right) ^{\eta }$,

\noindent C) $\left\vert k_{m,n}(t)\right\vert \leq c_{3}$, \ $\left( h_{m}\leq
\left\vert t\right\vert <\pi \right) ,$

\noindent where $h_{m}=\frac{\pi }{\left( m+1\right) ^{\gamma }}$ and $\gamma$, $\eta
,c_{2},c_{3}>0$ are independent of $m,n$.

For the operator $K_{m,n}[f]$ we will prove the following general estimate:

\begin{theorem}
Let $k_{m,n}=k_{m,n}(x)$ $\left( 0\leq m\leq n<\infty \right) $ satisfy the
conditions B) and C). If $f\in L_{2\pi }^{p}$ with $p=p(x)$ $\in \Pi _{2\pi
} $, then%
\[
\left\Vert K_{m,n}[f]\right\Vert _{p}=O\left( \mu _{m,n}\right) \left\Vert
f\right\Vert _{p}, 
\]%
where 
\[
\mu _{m,n}=\mu \left( k_{m,n}\right) =\int_{-h_{m}}^{h_{m}}\left\vert
k_{m,n}(t)\right\vert dt. 
\]
\end{theorem}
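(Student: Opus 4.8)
The plan is to split the convolution integral into two parts according to the two regimes that govern the kernel, and estimate each using the two distinct properties (B) and (C). Write $K_{m,n}[f](x) = I_m(x) + J_m(x)$ where $I_m(x) = \int_{|t|\le h_m} f(x+t)k_{m,n}(t)\,dt$ (after the substitution $t\mapsto t-x$) and $J_m(x) = \int_{h_m \le |t| < \pi} f(x+t)k_{m,n}(t)\,dt$. By the triangle inequality in $L^{p(x)}_{2\pi}$ it suffices to bound $\|I_m\|_p$ and $\|J_m\|_p$ separately, showing the first is $O(\mu_{m,n})\|f\|_p$ and the second is $O(1)\|f\|_p$ (and since $\mu_{m,n}$ is bounded below away from $0$ — or one absorbs the $O(1)$ into $O(\mu_{m,n})$ if $\mu_{m,n}$ can be small, which requires a small remark — the conclusion follows).

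\textbf{Estimating the far part $J_m$.} Here $|k_{m,n}(t)| \le c_3$ by (C), so $|J_m(x)| \le c_3 \int_{-\pi}^{\pi} |f(x+t)|\,dt = c_3 \|f\|_{L^1_{2\pi}}$, a constant independent of $x$. Thus $J_m$ is a bounded function, and on a space of finite measure over a period one has $\|g\|_p \le C\|g\|_\infty$ for constants $g$ (more precisely $\|\,\mathrm{const}\,\|_p$ is comparable to that constant since $p^- < \infty$). It then remains to compare $\|f\|_{L^1_{2\pi}}$ with $\|f\|_p$; the embedding $L^{p(x)}_{2\pi} \hookrightarrow L^1_{2\pi}$ holds because $p(x)\ge 1$ on a finite-measure space, giving $\|f\|_1 \le C\|f\|_p$. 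Combining, $\|J_m\|_p = O(1)\|f\|_p$. This part is essentially soft.

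\textbf{Estimating the near part $I_m$ — the main obstacle.} This is where condition (\ref{1}) on $p$ and the quantity $\mu_{m,n}$ enter, and it is the heart of the argument, following the Hardy–Littlewood / Sharapudinov technique. The idea is to write $I_m$ as an average of translates of $f$ against the probability-like measure $|k_{m,n}(t)|\,dt/\mu_{m,n}$ on $[-h_m,h_m]$, apply a Jensen-type (convexity) inequality pointwise to pull the exponent $p(x)$ inside, and then integrate in $x$. The difficulty is that the exponent varies: one cannot naively apply Jensen because $|f(x+t)|^{p(x)}$ is not $|f(x+t)|^{p(x+t)}$. One handles this by using property B) to bound $|k_{m,n}(t)| \le c_2(n+1)^\eta$, but more essentially by exploiting that for $|t|\le h_m = \pi/(m+1)^\gamma$ the log-Hölder condition (\ref{1}) gives $\left|p(x)-p(x+t)\right|\ln\frac{2\pi}{|t|} = O(1)$, hence $|t|^{p(x)-p(x+t)}$ is bounded above and below by absolute constants; this lets one replace $p(x)$ by $p(x+t)$ inside the integrand at the cost of a constant factor, possibly after splitting $|f(x+t)|$ at the threshold $1$. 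After this exchange, Fubini and the normalization by $\mu_{m,n}$ yield $\int_{-\pi}^\pi |I_m(x)/(\alpha\mu_{m,n})|^{p(x)}\,dx \lesssim \int_{-\pi}^\pi |f(y)/\alpha|^{p(y)}\,dy + (\text{lower-order terms})$, from which $\|I_m\|_p = O(\mu_{m,n})\|f\|_p$ follows by the definition of the modular norm. I expect the bookkeeping around the threshold $1$ (where the function and exponent interact), the precise use of (\ref{1}) to control $|t|^{p(x)-p(x+t)}$ uniformly for $|t|\le h_m$, and the verification that the remainder terms are genuinely dominated, to be the delicate steps; the powers $(n+1)^\eta$ and $(m+1)^\gamma$ should cancel or be absorbed precisely because the admissible range $|t|\le h_m$ is calibrated to $m$.
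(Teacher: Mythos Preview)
Your overall architecture matches the paper's: split $K_{m,n}[f]$ into a far piece over $h_m\le|t|<\pi$ handled by condition C) together with the embedding $L^{p(\cdot)}_{2\pi}\hookrightarrow L^1_{2\pi}$, and a near piece over $|t|\le h_m$ handled by Jensen and the log-H\"older condition~\eqref{1}. Your far-part argument is essentially the paper's, and your observation that the final bound is really $O(1+\mu_{m,n})$ unless $\mu_{m,n}$ is bounded below is well taken (the paper glosses over this).

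The near-part sketch, however, has a genuine gap. You assert that the boundedness of $|t|^{p(x)-p(x+t)}$ ``lets one replace $p(x)$ by $p(x+t)$ inside the integrand at the cost of a constant factor, possibly after splitting $|f(x+t)|$ at the threshold $1$.'' But after Jensen one faces $|f(x+t)|^{p(x)}$, and what would need to be bounded is $|f(x+t)|^{p(x)-p(x+t)}$, not $|t|^{p(x)-p(x+t)}$. The threshold split handles $|f|\le1$ and the case $p(x)\le p(x+t)$, but when $|f(x+t)|>1$ and $p(x)>p(x+t)$ you only know $p(x)-p(x+t)\le C/\ln\bigl((m+1)^\gamma\bigr)$, and $|f|^{C/\ln(m+2)}$ is not uniformly bounded. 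The paper circumvents this by a different ordering of the estimates: it partitions $[-\pi,\pi]$ with mesh $h_n$, sets $s_k=\min_{[x_{k-1},x_{k+2}]}p$, and applies the exponent defect $p(x)-s_k=O(1/\ln(n+1))$ to the \emph{whole} near integral, which by B) satisfies $\bigl|\int_{x-h_m}^{x+h_m}f\,k_{m,n}\bigr|\le c_2(n+1)^{\eta}\|f\|_1$; thus $|\cdot|^{p(x)-s_k}=\bigl((n+1)^{\eta}\bigr)^{O(1/\ln(n+1))}=O(1)$. Only then is Jensen applied, with the \emph{constant} exponent $s_k$, yielding $|f(t)|^{s_k}$, which is dominated by the modular because $s_k$ minorizes $p$ locally. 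In short, the log-H\"older saving and condition B) must be spent on the bounded quantity $I_m(x)$, not on the unbounded integrand $|f(x+t)|$; your sketch inverts this and therefore does not close.
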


\begin{proof}
Let%
\begin{equation}
x_{k}=kh_{n}-\pi ,\text{ \ \ \ \ \ \ \ \ \ \ \ \ \ \ \ \ \ \ \ \ \ \ \ \ \ \
\ \ \ \ \ \ \ \ }k=0,\pm 1,\pm 2,...  \label{2}
\end{equation}%
\begin{equation}
s_{k}=\min \left\{ p\left( x\right) :x_{k-1}\leq x\leq x_{k+2}\right\} ,%
\text{ }k=0,\pm 1,\pm 2,...  \label{3}
\end{equation}%
\begin{equation}
p_{t}\left( x\right) =s_{k}\text{ \ \ }\left( x_{k-1}-t\leq x\leq
x_{k+1}-t\right) ,\text{ \ }k=0,\pm 1,\pm 2,...  \label{4}
\end{equation}%
whence $p_{t}\left( x\right) =p_{0}\left( x+t\right) $ is a $2\pi $ -
periodic step function such that%
\[
p_{t}\left( x\right) \leq p\left( x\right) . 
\]

Denote by 
\[
E_{m}\left( x\right) =\left( -\pi ,\pi \right) \backslash \left(
x-h_{m},x+h_{m}\right) , 
\]%
when $\left( x-h_{m},x+h_{m}\right) \subset \left( -\pi ,\pi \right) $, but 
\[
E_{m}\left( x\right) =\left( -\pi ,\pi \right) \backslash \left( -\pi
,x+h_{m}\right) 
\]%
or 
\[
E_{m}\left( x\right) =\left( -\pi ,\pi \right) \backslash \left( x-h_{m},\pi
\right) , 
\]%
when $x-h_{m}<-\pi $\ or $\pi <x+h_{m}$, respectively.

Let%
\[
\left\Vert f\right\Vert _{p}\leq 1.
\]%
It is clear that for $\overline{p}=\max \left\{ p\left( x\right) :-\pi \leq
x\leq \pi \right\} $%
\begin{eqnarray*}
&&\left( \int_{-\pi }^{\pi }\left\vert K_{m,n}[f](x)\right\vert
^{p(x)}dx\right) ^{1/\overline{p}} \\
&=&\left( \int_{-\pi }^{\pi }\left\vert \int_{-\pi }^{\pi
}f(t)k_{m,n}(t-x)dt\right\vert ^{p(x)}dx\right) ^{1/\overline{p}} \\
&=&\left( \int_{-\pi }^{\pi }\left\vert \left( \int_{E_{m}\left( x\right)
}+\int_{x-h_{m}}^{x+h_{m}}\right) f(t)k_{m,n}(t-x)dt\right\vert
^{p(x)}dx\right) ^{1/\overline{p}}
\end{eqnarray*}%
\begin{eqnarray*}
&\leq &\left( \int_{-\pi }^{\pi }\left\vert \int_{E_{m}\left( x\right)
}f(t)k_{m,n}(t-x)dt\right\vert ^{p(x)}dx\right) ^{1/\overline{p}} \\
&&+\left( \int_{-\pi }^{\pi }\left\vert
\int_{x-h_{m}}^{x+h_{m}}f(t)k_{m,n}(t-x)dt\right\vert ^{p(x)}dx\right) ^{1/%
\overline{p}} \\
&=&J_{m}^{1/\overline{p}}+J_{x}^{1/\overline{p}}.
\end{eqnarray*}%
Since%
\[
\left\Vert f\right\Vert _{q}\leq \left( 2\pi +1\right) \left\Vert
f\right\Vert _{p}
\]%
with $q\left( x\right) \leq p\left( x\right) $ for $-\pi \leq x\leq \pi $
(cf. \cite{IIS0}), by condition C), 
\begin{eqnarray*}
&&\left\vert \int_{E_{m}\left( x\right) }f(t)k_{m,n}(t-x)dt\right\vert  \\
&=&O\left( 1\right) \int_{E_{m}\left( x\right) }\left\vert f(t)\right\vert
dt=O\left( 1\right) \int_{-\pi }^{\pi }\left\vert f(t)\right\vert dt=O\left(
1\right) \left\Vert f\right\Vert _{p}=O\left( 1\right) 
\end{eqnarray*}%
and therefore 
\[
J_{m}=O\left( 1\right) .
\]%
In case of integral $J_{x}$, using (\ref{1}), (\ref{2}) and (\ref{3}), for $%
x_{k}\leq x\leq x_{k+1},$ we obtain that%
\[
\left\vert p(x)-s_{k}\right\vert =O\left( \frac{1}{\ln 2\left( n+1\right)
^{\gamma }}\right) ,
\]%
and therefore by condition B)%
\begin{eqnarray*}
\left\vert \int_{x-h_{m}}^{x+h_{m}}f(t)k_{m,n}(t-x)dt\right\vert
^{p(x)-s_{k}} &=&\left( n+1\right) ^{\eta O\left( \frac{1}{\ln 2\left(
n+1\right) ^{\gamma }}\right) }\left( \int_{x-h_{m}}^{x+h_{m}}\left\vert
f(t)\right\vert dt\right) ^{O\left( \frac{1}{\ln 2\left( n+1\right) ^{\gamma
}}\right) } \\
&=&O\left( 1\right) \left( \left\Vert f\right\Vert _{p}\right) ^{O\left( 
\frac{1}{\ln 2\left( n+1\right) ^{\gamma }}\right) }=O\left( 1\right) .
\end{eqnarray*}%
Next,%
\begin{eqnarray*}
J_{x} &=&\sum_{k=0}^{2[n^{\gamma }]}\int_{x_{k}}^{x_{k+1}}\left\vert
\int_{x-h_{m}}^{x+h_{m}}f(t)k_{m,n}(t-x)dt\right\vert ^{s_{k}+p(x)-s_{k}}dx
\\
&=&\sum_{k=0}^{2[n^{\gamma }]}\int_{x_{k}}^{x_{k+1}}\left\vert
\int_{x-h_{m}}^{x+h_{m}}f(t)k_{m,n}(t-x)dt\right\vert
^{p(x)-s_{k}}\left\vert
\int_{x-h_{m}}^{x+h_{m}}f(t)k_{m,n}(t-x)dt\right\vert ^{s_{k}}dx.
\end{eqnarray*}%
Thus 
\[
J_{x}=O\left( 1\right) \sum_{k=0}^{2[n^{\gamma }]}\left( \mu _{m,n}\right)
^{s_{k}}\int_{x_{k}}^{x_{k+1}}\left\vert \frac{1}{ \mu _{m,n}}\int_{x-h_{m}}^{x+h_{m}}f(t)k_{m,n}(t-x)dt\right\vert ^{s_{k}}dx.
\]%
By the Jensen inequality,%
\begin{eqnarray*}
J_{x} &=&O\left( 1\right) \sum_{k=0}^{2[n^{\gamma }]}\left( \mu
_{m,n}\right) ^{s_{k}}\int_{x_{k}}^{x_{k+1}}\left\vert \frac{1}{\mu _{m,n}}%
\int_{x-h_{m}}^{x+h_{m}}f(t)k_{m,n}(t-x)dt\right\vert ^{s_{k}}dx \\
&=&O\left( 1\right) \sum_{k=0}^{2[n^{\gamma }]}\left( \mu _{m,n}\right)
^{s_{k}-1}\int_{x_{k}}^{x_{k+1}}\left[ \int_{x-h_{m}}^{x+h_{m}}\left\vert
f(t)\right\vert ^{s_{k}}\left\vert k_{m,n}(t-x)\right\vert dt\right] dx \\
&=&O\left( 1\right) \left( \mu _{m,n}\right) ^{\overline{p}%
-1}\int_{-h_{m}}^{h_{m}}\left\vert k_{m,n}(t)\right\vert \left(
\sum_{k=0}^{2[n^{\gamma }]}\int_{x_{k}-t}^{x_{k+1}-t}\left\vert
f(x)\right\vert ^{s_{k}}dx\right) dt \\
&=&O\left( 1\right) \left( \mu _{m,n}\right) ^{\overline{p}%
-1}\int_{-h_{m}}^{h_{m}}\left\vert k_{m,n}(t)\right\vert \left( \int_{-\pi
-t}^{\pi -t}\left\vert f(x)\right\vert ^{p_{t}\left( x\right) }dx\right) dt
\\
&=&O\left( 1\right) \left( \mu _{m,n}\right) ^{\overline{p}%
-1}\int_{-h_{m}}^{h_{m}}\left\vert k_{m,n}(t)\right\vert \left( \int_{-\pi
}^{\pi }\left\vert f(x)\right\vert ^{p_{t}\left( x\right) }dx\right) dt.
\end{eqnarray*}%
Similar to \cite[(17) p.295]{IIS1} we have%
\[
\int_{-\pi }^{\pi }\left\vert f(x)\right\vert ^{p_{t}\left( x\right) }dx\leq
\left( 2\pi +1\right) ^{\overline{p}}.
\]%
Hence%
\[
J_{x}^{1/\overline{p}}=O\left( \mu _{m,n}\right) 
\]%
and our result follows.
\end{proof}

\begin{corollary}
If we put $m=n=\lambda $ in the assumptions of Theorem 1, then for $f\in
L_{2\pi }^{p}$ with $p=p(x)$ $\in \Pi _{2\pi }$ the following estimate%
\[
\left\Vert K_{\lambda }[f]\right\Vert _{p}=O\left( \mu _{\lambda }\right)
\left\Vert f\right\Vert _{p} 
\]%
holds, \textit{where }$\mu _{\lambda }=\int_{-h_{\lambda }}^{h_{\lambda
}}\left\vert k_{\lambda }(t)\right\vert dt$ and $k_{\lambda }=k_{\lambda, \lambda }$.
\end{corollary}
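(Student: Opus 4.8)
The plan is to derive the Corollary directly from Theorem~1 by specializing the two-parameter kernel family to the diagonal. First I would observe that, given the single-parameter kernel $k_\lambda$ satisfying the hypotheses A$^\circ$), B$^\circ$), C$^\circ$) of Theorem~A (or, more precisely, the two-parameter conditions B) and C) with $m=n=\lambda$), one simply sets $k_{m,n}:=k_\lambda$ whenever $m=n=\lambda$. With this identification we have $h_m=h_n=h_\lambda=\pi/(\lambda+1)^\gamma$, so conditions B) and C) of Theorem~1 become exactly $\sup_{|t|\le h_\lambda}|k_\lambda(t)|\le c_2(\lambda+1)^\eta$ and $|k_\lambda(t)|\le c_3$ for $h_\lambda\le|t|<\pi$, which are precisely the hypotheses assumed in the statement of the Corollary.

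Next I would simply invoke Theorem~1 with the parameters $m=n=\lambda$. The conclusion of Theorem~1 reads $\left\Vert K_{m,n}[f]\right\Vert_p=O(\mu_{m,n})\left\Vert f\right\Vert_p$ with $\mu_{m,n}=\int_{-h_m}^{h_m}|k_{m,n}(t)|\,dt$. Substituting $m=n=\lambda$ gives $\mu_{\lambda,\lambda}=\int_{-h_\lambda}^{h_\lambda}|k_\lambda(t)|\,dt=\mu_\lambda$ and $K_{\lambda,\lambda}[f](x)=\int_{-\pi}^{\pi}f(t)k_\lambda(t-x)\,dt=K_\lambda[f](x)$, so the estimate of Theorem~1 specializes verbatim to $\left\Vert K_\lambda[f]\right\Vert_p=O(\mu_\lambda)\left\Vert f\right\Vert_p$, which is the assertion of the Corollary.

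There is essentially no obstacle here: the Corollary is a pure specialization, and the only thing worth checking is that the hypotheses match up correctly under the substitution $m=n=\lambda$, in particular that a single kernel $k_\lambda$ indeed defines a legitimate member of the family $\{k_{m,n}\}_{0\le m\le n<\infty}$ along the diagonal. One might also remark, as the authors do in the surrounding text, that this recovers and slightly sharpens Theorem~A: the original statement only asserts $\left\Vert K_\lambda[f]\right\Vert_p=O(1)\left\Vert f\right\Vert_p$, whereas condition A$^\circ$) together with B$^\circ$) gives $\mu_\lambda=\int_{-h_\lambda}^{h_\lambda}|k_\lambda(t)|\,dt\le\int_{-\pi}^{\pi}|k_\lambda(t)|\,dt\le c_1^\circ$, so $O(\mu_\lambda)=O(1)$, and whenever $\mu_\lambda\to0$ one obtains a genuine rate of approximation that Theorem~A does not provide. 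Thus the proof of the Corollary is immediate from Theorem~1.
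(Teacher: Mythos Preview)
Your proposal is correct and matches the paper's approach: the paper gives no separate proof of this corollary, treating it as an immediate specialization of Theorem~1 under the substitution $m=n=\lambda$, exactly as you describe. Your additional remark connecting $\mu_\lambda=O(1)$ to Theorem~A is also what the paper records in the subsequent Remark.
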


\begin{remark}
If we additionally assume that $\mu _{\lambda }=O\left( 1\right) ,$\ then
Corollary 2 gives\ Theorem A with the result \cite{IIS1} of I. I.
Sharapudinov, under the slight weaker conditions.
\end{remark}

\section{De la Vall\'{e}e Poussin operator}

Let $f\in L_{2\pi }^{1}$ and consider the trigonometric Fourier series 
\[
Sf(x):=\frac{a_{0}(f)}{2}+\sum_{\nu =1}^{\infty }(a_{\nu }(f)\cos \nu
x+b_{\nu }(f)\sin \nu x) 
\]%
with the partial sums\ $S_{k}f.$ For $0\leq m\leq n,$ $m, n=0,1,2,...$\ denote
by%
\[
V_{n,m}[f]\left( x\right) =\frac{1}{m+1}\sum_{k=n-m}^{n}S_{k}[f]\left(
x\right) =\frac{1}{\pi \left( m+1\right) }\int_{-\pi }^{\pi }f\left(
t\right) \Phi _{n,m}\left( t-x\right) dt 
\]%
the de la Vall\'{e}e Poussin means of the series $Sf,$ where%
\[
\Phi _{n,m}\left( t\right) =\frac{1}{m+1}\frac{\sin \frac{\left( m+1\right) t%
}{2}\sin \frac{\left( 2n-m+1\right) t}{2}}{2\sin ^{2}\frac{t}{2}}. 
\]

It is clear that the kernel family $\left\{ \Phi _{m,n}(x)\right\} _{1\leq
m\leq n<\infty }$ satisfies the conditions B) with $\eta =1$ and C) with $%
\gamma =\frac{1}{2}$. By the following calculation%
\begin{eqnarray*}
&&\frac{1}{m+1}\int_{0}^{h_{m}}\frac{\left\vert \sin \frac{\left( m+1\right)
t}{2}\sin \frac{\left( 2n-m+1\right) t}{2}\right\vert }{2\sin ^{2}\frac{t}{2}%
}dt \\
&=&\frac{1}{m+1}\left( \int_{0}^{\frac{\pi }{n+1}}+\int_{\frac{\pi }{n+1}}^{%
\frac{\pi }{m+1}}+\int_{\frac{\pi }{m+1}}^{h_{m}}\right) \frac{\left\vert
\sin \frac{\left( m+1\right) t}{2}\sin \frac{\left( 2n-m+1\right) t}{2}%
\right\vert }{2\sin ^{2}\frac{t}{2}}dt \\
&\leq &\int_{0}^{\frac{\pi }{n+1}}\frac{\left( 2n-m+1\right) dt}{2}+\int_{%
\frac{\pi }{n+1}}^{\frac{\pi }{m+1}}\frac{dt}{2\frac{t}{2}\frac{2}{\pi }}+%
\frac{1}{m+1}\int_{\frac{\pi }{m+1}}^{h_{m}}\frac{dt}{2\left( \frac{t}{2}%
\frac{2}{\pi }\right) ^{2}} \\
&\leq &\pi +\frac{\pi }{2}\ln \frac{n+1}{m+1}+\frac{\pi }{2}
\end{eqnarray*}%
we obtain%
\[
\mu \left( \Phi _{m,n}\right) =\int_{-h_{m}}^{h_{m}}\left\vert \Phi
_{m,n}(t)\right\vert dt\leq \pi \left( 3+\ln \frac{n+1}{m+1}\right) . 
\]%
Hence, by Theorem 1, we have:

\begin{theorem}
If $L_{2\pi }^{p}$ with $p=p(x)$ $\in \Pi _{2\pi }$, then%
\[
\left\Vert V_{n,m}[f]\right\Vert _{p}=O\left( 1+\ln \frac{n+1}{m+1}\right)
\left\Vert f\right\Vert _{p}\text{ \ \ \ }\left( 0\leq m\leq n,\text{ }%
m, n=0,1,2,...\right) . 
\]
\end{theorem}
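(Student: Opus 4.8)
The plan is to apply Theorem 1 directly to the de la Vall\'{e}e Poussin kernel family $\left\{ \Phi _{n,m}\right\}$, since Theorem 4 is essentially a corollary once the kernel bounds are verified and $\mu\left(\Phi_{n,m}\right)$ is estimated. First I would note that $V_{n,m}[f](x) = \frac{1}{\pi}K_{m,n}[f](x)$ where $K_{m,n}$ is the convolution operator of Theorem 1 with kernel $k_{m,n} = \frac{1}{\pi}\Phi_{n,m}$; the factor $\frac{1}{\pi}$ is harmless and gets absorbed into the $O(1)$ constant.

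Second, I would verify that $\left\{\Phi_{n,m}\right\}$ satisfies conditions B) and C). For B) with $\eta = 1$: on $\left\vert t\right\vert \leq h_m = \frac{\pi}{(m+1)^{1/2}}$, using $\left\vert \sin\frac{(m+1)t}{2}\right\vert \leq \frac{(m+1)\left\vert t\right\vert}{2}$, $\left\vert \sin\frac{(2n-m+1)t}{2}\right\vert \leq 1$, and $\sin^2\frac{t}{2} \geq \left(\frac{t}{\pi}\right)^2$ for $\left\vert t\right\vert \leq \pi$, one gets $\left\vert \Phi_{n,m}(t)\right\vert \leq \frac{1}{m+1}\cdot\frac{(m+1)\left\vert t\right\vert /2}{2 t^2/\pi^2}$, which is not bounded by $(n+1)^1$ near $t=0$ — so instead I would use the cruder bound $\left\vert \Phi_{n,m}(t)\right\vert \leq \frac{1}{m+1}\cdot\frac{(m+1)\left\vert t\right\vert}{2}\cdot\frac{(2n-m+1)\left\vert t\right\vert}{2}\big/\left(2 (t/\pi)^2\right) \leq c(n+1)$, valid for all $t$, giving B). For C): on $h_m \leq \left\vert t\right\vert < \pi$ we use $\left\vert \sin\frac{(m+1)t}{2}\sin\frac{(2n-m+1)t}{2}\right\vert \leq 1$ and $\sin^2\frac{t}{2}\geq \left(\frac{h_m}{\pi}\right)^2 = \frac{1}{m+1}$, so $\left\vert \Phi_{n,m}(t)\right\vert \leq \frac{1}{m+1}\cdot\frac{m+1}{2} = \frac12$, giving C) with $\gamma = \frac12$.

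Third, I would estimate $\mu\left(\Phi_{n,m}\right) = \int_{-h_m}^{h_m}\left\vert \Phi_{n,m}(t)\right\vert dt$ by the three-piece splitting of the interval $[0,h_m]$ displayed in the excerpt: on $[0,\frac{\pi}{n+1}]$ use $\left\vert \sin\frac{(m+1)t}{2}\sin\frac{(2n-m+1)t}{2}\right\vert / \left(2\sin^2\frac t2\right) \leq \frac{(m+1)}{2}\cdot(2n-m+1)$ after cancelling one power of $t$, contributing $O(1)$ after the $\frac{1}{m+1}$ factor; on $[\frac{\pi}{n+1},\frac{\pi}{m+1}]$ cancel both sine factors in the numerator against one power of $t$ each in $\sin^2\frac t2 \asymp t^2$, leaving $\int dt/t \asymp \ln\frac{n+1}{m+1}$; on $[\frac{\pi}{m+1}, h_m]$ bound both numerator sines by $1$ and integrate $1/t^2$, which is $O(m+1)$ and after the $\frac{1}{m+1}$ factor gives $O(1)$. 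Summing, $\mu\left(\Phi_{n,m}\right) \leq \pi\left(3 + \ln\frac{n+1}{m+1}\right)$. Then Theorem 1 yields $\left\Vert V_{n,m}[f]\right\Vert_p = O\left(\mu\left(\Phi_{n,m}\right)\right)\left\Vert f\right\Vert_p = O\left(1 + \ln\frac{n+1}{m+1}\right)\left\Vert f\right\Vert_p$.

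The main obstacle is getting the constants in condition B) right: the natural "smallness near zero" bound on the Fej\'{e}r-type kernel is not of size $(n+1)^\eta$ unless one is careful, so one must use the product of \emph{both} linear sine bounds (not just one) to see that $\left\vert \Phi_{n,m}(t)\right\vert$ stays $O(n+1)$ uniformly — the second sine factor supplies the extra power of $\left\vert t\right\vert$ that tames the $1/t^2$ singularity of $1/\sin^2\frac t2$. Everything else (the $\frac1\pi$ normalization, the three-interval split, the logarithm bookkeeping) is routine. Since all the pieces are already displayed in the excerpt, the proof reduces to the single sentence: the kernel family $\left\{\Phi_{n,m}\right\}$ meets the hypotheses of Theorem 1 with $\eta=1$, $\gamma=\frac12$, and $\mu\left(\Phi_{n,m}\right) \leq \pi\left(3+\ln\frac{n+1}{m+1}\right)$, hence Theorem 1 applies and gives the claim.
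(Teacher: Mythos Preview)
Your proof is correct and follows precisely the paper's own approach: verify that the de la Vall\'ee Poussin kernel family satisfies conditions B) (with $\eta=1$) and C) (with $\gamma=\tfrac12$), estimate $\mu(\Phi_{n,m})$ by the same three-interval split $[0,\tfrac{\pi}{n+1}]\cup[\tfrac{\pi}{n+1},\tfrac{\pi}{m+1}]\cup[\tfrac{\pi}{m+1},h_m]$ to obtain $\mu(\Phi_{n,m})\le \pi\bigl(3+\ln\tfrac{n+1}{m+1}\bigr)$, and then invoke Theorem~1. The paper actually asserts B) and C) without the detailed verification you supply, so your write-up is if anything more complete (modulo a harmless slip in the $1/\pi$ normalization bookkeeping).
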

From Theorem 4 we get the following corollary:
\begin{corollary}
Let $f\in L_{2\pi }^{p}$ with $p=p(x)$ $\in \Pi _{2\pi }.$ If $m=O\left( n\right) $, then%
\[
\left\Vert V_{n,m}[f]\right\Vert _{p}=O\left( 1\right) \left\Vert
f\right\Vert _{p} 
\]%
hold.
\end{corollary}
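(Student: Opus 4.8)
The plan is to read Corollary~5 off directly from Theorem~4. That theorem asserts
\[
\left\Vert V_{n,m}[f]\right\Vert _{p}=O\left( 1+\ln \frac{n+1}{m+1}\right) \left\Vert f\right\Vert _{p}
\]
for every admissible pair $0\le m\le n$, so it suffices to verify that the growth restriction on $m$ and $n$ keeps the factor $1+\ln \frac{n+1}{m+1}$ bounded by an absolute constant. First I would make the hypothesis quantitative: unwinding the assumed relation between $m$ and $n$ yields a constant $C\ge 1$, independent of the pair $(m,n)$, with $n+1\le C\left( m+1\right) $. Since also $m+1\le n+1$, monotonicity of the logarithm then gives
\[
1\le \frac{n+1}{m+1}\le C,\qquad \text{hence}\qquad 0\le \ln \frac{n+1}{m+1}\le \ln C ,
\]
so that $1+\ln \frac{n+1}{m+1}\le 1+\ln C=O\left( 1\right) $ uniformly in $m$ and $n$.

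Substituting this bound into the estimate of Theorem~4 absorbs the logarithmic factor into the $O\left( \cdot \right) $ constant and produces
\[
\left\Vert V_{n,m}[f]\right\Vert _{p}=O\left( 1\right) \left\Vert f\right\Vert _{p},
\]
the implied constant depending only on $p$ (through the constants already present in Theorems~1 and~4) and on $C$. There is no genuine obstacle here; the one point worth recording is that the constant hidden in the growth assumption on $m,n$ enters the final estimate only through the additive term $\ln C$, so the passage from Theorem~4 to Corollary~5 costs nothing essential. If one wished to be explicit, the argument in fact gives $\left\Vert V_{n,m}[f]\right\Vert _{p}\le \left( A+B\ln C\right) \left\Vert f\right\Vert _{p}$ with $A,B$ the same absolute constants appearing in Theorem~4, but for the qualitative statement the short deduction above is all that is required.
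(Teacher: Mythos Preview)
Your deduction is correct and is exactly the (unwritten) argument the paper intends: Corollary~5 is recorded immediately after Theorem~4 as a direct consequence, with no separate proof given. One caveat worth flagging: the hypothesis $m=O(n)$ as printed is vacuous here (it is automatic from $0\le m\le n$, and with $m=0$ the factor $1+\ln(n+1)$ is unbounded, cf.\ Corollary~6); the inequality you actually extract, $n+1\le C(m+1)$, is $n=O(m)$, which is evidently the intended condition and the one that makes the logarithmic factor bounded.
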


In the special case we can consider the following Fourier operator:%
\[
S_{n}[f]\left( x\right) =V_{n,0}[f]\left( x\right) =\frac{1}{\pi }\int_{-\pi
}^{\pi }f\left( t\right) \Phi _{n,0}\left( t-x\right) dt, 
\]%
where%
\[
\Phi _{n,0}\left( t\right) =\frac{\sin \frac{\left( 2n+1\right) t}{2}}{2\sin 
\frac{t}{2}}. 
\]%
For this operator we have:

\begin{corollary}
Let $f\in L_{2\pi }^{p}$ with $p=p(x)$ $\in \Pi _{2\pi }.$ If we put $m=0$
in Theorem 4, then 
\[
\left\Vert S_{n}[f]\right\Vert _{p}=O\left( 1+\ln \left( n+1\right) \right)
\left\Vert f\right\Vert _{p}\text{ \ \ \ }\left( \text{ }n=0,1,2,...\right)
. 
\]%
\end{corollary}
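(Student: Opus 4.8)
The plan is to obtain this as a direct specialization of Theorem~4 to the endpoint case $m=0$. First I would note that the operator in the statement is genuinely a de la Vall\'ee Poussin mean: from the definition,
\[
V_{n,0}[f](x)=\frac{1}{0+1}\sum_{k=n-0}^{n}S_{k}[f](x)=S_{n}[f](x),
\]
and, putting $m=0$ in the kernel,
\[
\Phi_{n,0}(t)=\frac{\sin\frac{t}{2}\,\sin\frac{(2n+1)t}{2}}{2\sin^{2}\frac{t}{2}}=\frac{\sin\frac{(2n+1)t}{2}}{2\sin\frac{t}{2}},
\]
which is exactly the $\Phi_{n,0}$ displayed before the statement (the Dirichlet kernel). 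Since the whole family $\left\{\Phi_{m,n}\right\}$ was already shown to satisfy conditions B) and C), no further kernel estimates are required.

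Next I would simply read off the conclusion of Theorem~4 at $m=0$. The admissible range there is $0\le m\le n$, $m,n=0,1,2,...$, so $m=0$ is allowed, and the logarithmic factor degenerates as $\ln\frac{n+1}{m+1}=\ln\frac{n+1}{1}=\ln(n+1)$. Hence
\[
\left\Vert S_{n}[f]\right\Vert_{p}=\left\Vert V_{n,0}[f]\right\Vert_{p}=O\left(1+\ln(n+1)\right)\left\Vert f\right\Vert_{p}\qquad(n=0,1,2,...),
\]
which is the asserted estimate.

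There is essentially no obstacle: the corollary is pure substitution, and the only care needed is the bookkeeping noted above. Should one prefer a proof not routed through Theorem~4, the same estimate follows by applying Theorem~1 directly to $k_{m,n}=\frac{1}{\pi}\Phi_{n,0}$ with $m=0$ (so $h_{0}=\pi$), together with the bound $\mu(\Phi_{0,n})=\int_{-\pi}^{\pi}\left\vert\Phi_{n,0}(t)\right\vert\,dt=O(1+\ln(n+1))$, obtained from the same three-interval splitting of $\int_{0}^{h_{0}}\left\vert\Phi_{n,0}\right\vert$ used in the computation preceding Theorem~4; this merely reproduces the classical order of the Lebesgue constants in the variable-exponent setting.
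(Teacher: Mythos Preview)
Your proposal is correct and follows exactly the paper's approach: the corollary is stated with its own proof embedded in the hypothesis (``If we put $m=0$ in Theorem~4''), and you simply carry out that substitution, checking $V_{n,0}=S_n$ and $\ln\frac{n+1}{1}=\ln(n+1)$. The optional direct route via Theorem~1 you mention is consistent with the paper's derivation of Theorem~4 but is not needed.
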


\begin{remark}
In the case $p\equiv 1$ the results of this section we can find in the
monograph of A. Zygmund \cite[Ch. II, p.70, Ch. III, p.90]{AZ} (see e.g.\cite%
[Ch. II, p.117-8]{WZ} .
\end{remark}

\end{document}